\newtheorem{ut}{Theorem}
\numberwithin{ut}{section}
\numberwithin{equation}{section}
\newtheorem{ul}[ut]{Lemma}
\newtheorem{uc}[ut]{Corollary}
\newtheorem{ucl}[ut]{Claim}
\theoremstyle{definition}
\newtheorem{ur}[ut]{Remark}
\begin{document}

\title{Distinguishing endpoint sets from Erd\H{o}s space}

\subjclass[2010]{37F10, 30D05, 54F45} 
\keywords{Erd\H{o}s space, Julia set, complex exponential, escaping endpoint}
\address{Department of Mathematics, Auburn University at Montgomery, Montgomery AL 36117, United States of America}
\email{dsl0003@auburn.edu}
\author{DAVID S. LIPHAM}

\begin{abstract}
 We prove that the set of all endpoints of the Julia set of $f(z)=\exp(z)-1$ which escape to infinity under iteration of $f$ is not homeomorphic to the rational Hilbert space $\mathfrak E$. As a corollary, we show that the  set of all points $z\in \mathbb C$ whose orbits either escape to $\infty$ or attract to $0$ is path-connected.  We extend these results to many other functions in the exponential family.
 \end{abstract}

\maketitle

\section{Introduction}

The exponential family $f_a(z) = e^z + a$; $a\in \mathbb C$, is the most studied
family of functions in the theory of the iteration of transcendental entire functions. For any parameter $a\in \mathbb C$, the Julia set $J(f_a)$ is known to be equal to the closure of the escaping set $I(f_a):=\{z\in \mathbb C:f_a^n(z)\to\infty\}$ \cite{el}.  
And when $a$ belongs to the Fatou set $F(f_a)$ (e.g.\ when $a\in (-\infty,-1]$), the Julia set $J(f_a)$  can be written as a union of uncountably many disjoint curves and endpoints \cite{dev,vas}.  
  A point $z\in J(f_a)$ is on a \textit{curve}  if there exists an arc $\alpha:[-1,1]\hookrightarrow I(f_a)$ such that $\alpha(0)= z$. A point $z_0\in J(f_a)$ is an \textit{endpoint} if $z_0$ is not on a curve and there is an arc $\alpha:[0,1]\hookrightarrow J(f_a)$ with $\alpha(0)=z_0$ and $\alpha(t)\in I(f_a)$ for all $t\in (0,1]$. The set of all endpoints of $J(f_a)$ is denoted $E(f_a)$.




The first  to study the surprising topological properties of the endpoints was Mayer in 1988. He proved that  $\infty$ is an explosion point for $E(f_a)$ for all  attracting fixed point parameters $a\in (-\infty,-1)$ \cite{may}. That is,  $E(f_a)$ is totally separated but its union with $\infty$ is a connected set.  A similarly paradoxical result is due to  McMullen 1987 and Karpinska 1999: If $a\in (-\infty,-1)$ then the Hausdorff dimension of $E(f_{a})$ is two \cite{mcm}, but the set of curves has Hausdorff dimension one \cite{kar}.


Alhabib and Rempe extended Mayer's result in 2016 by focusing on the endpoints of $I(f_a)$. They proved that  $\infty$ is an explosion point for  the \textit{escaping endpoint set} 
$$\dot E(f_a):= E(f_a)\cap I(f_a),$$ as well as for $E(f_a)$,  for every Fatou parameter $a\in F(f_a)$.  The set of non-escaping endpoints $E(f_a)\setminus I(f_a)$ is very different in this regard. Its union with $\infty$ is totally separated \cite{vas} and zero-dimensional \cite{lip2}. 

Conjugacy between escaping sets \cite{rem2} implies that $\dot E(f_a)$ and $\dot E(f_b)$ are topologically equivalent when $a$ and $b$ are Fatou parameters. The primary goal of this paper is to  distinguish   the space $\dot E(f_a)$; $a\in F(f_a)$, from a certain   line-free subgroup of the Hilbert space $\ell^2$.  For comparison, the entire endpoint set  $E(f_a)$ is homeomorphic to  \textit{complete Erd\H{o}s space} $$\mathfrak E_{\mathrm{c}}:=\{\mathbf{x}\in \ell^2:x_n\in \mathbb R\setminus \mathbb Q\text{ for each }n<\omega\}$$ for every  $a\in (-\infty,-1]$ \cite{31}. The space $\dot E(f_a)$ is not homeomorphic to $ \mathfrak E_{\mathrm{c}}$ \cite{lip}, but  has many of the same topological properties as   (the plain) \textit{Erd\H{o}s space} $$\mathfrak E:=\{\mathbf{x}\in \ell^2:x_n\in \mathbb Q\text{ for each }n<\omega\}.$$ For example, $\dot E(f_a)$ and $\mathfrak E$ are  both  one-dimensional, almost zero-dimensional $F_{\sigma\delta}$-spaces which are nowhere  $G_{\delta\sigma}$; see  \cite{lip,lip4}. Moreover $\dot E(f_a)$ contains a dense copy of $\mathfrak E$ in the form of all endpoints that escape to infinity in the imaginary direction \cite{lip3}.  However, in this paper we will show that the two spaces are not equivalent. This provides a negative answer  to  \cite[Question 1]{lip} and  suggests that $\dot E(f_a)$ is a fundamental object    between the `rational and irrational Hilbert spaces' $\mathfrak E$ and $\mathfrak E_{\mathrm{c}}$. It is still unknown whether $\dot E(f_a)$ is a topological group, or is at least homogeneous.

The key property distinguishing $\dot E(f_a)$ from $\mathfrak E$ will involve the notion of a C-set. A \textit{$C$-set} in a topological space $X$ is an intersection of clopen subsets of $X$. Note that for every rational number $q\in \mathbb Q$ the set $\{\mathbf x \in \mathfrak E:x_0=q\}$ is a nowhere dense C-set in $\mathfrak E$ because  the $\ell^2$-norm topology on $\mathfrak E$ is finer than the zero-dimensional topology that $\mathfrak E$ inherits from $\mathbb Q ^\omega$. Thus $\mathfrak E$ can be written as a countable union of nowhere dense C-sets. On the other hand:

\begin{ut}\label{t1}If $a\in F(f_a)$ then $\dot E(f_a)$ cannot be written as a countable union of nowhere dense C-sets. 
\end{ut}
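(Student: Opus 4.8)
The plan is to exploit the one feature that separates $\dot E(f_a)$ from $\mathfrak E$, namely the explosion point supplied by Alhabib and Rempe: the set $\dot E^* := \dot E(f_a)\cup\{\infty\}$ is connected while $\dot E(f_a)$ is totally separated. First I would record the elementary but decisive consequence of this connectedness: \emph{every nonempty clopen subset $U$ of $\dot E(f_a)$ has $\infty$ in its closure in $\hat{\mathbb C}$}. Indeed, otherwise $U$ would be simultaneously open and closed in $\dot E^*$ (it is open since $\dot E(f_a)$ is open in $\dot E^*$, and $U$ is closed in $\dot E(f_a)$, so its $\dot E^*$-closure adds at most the single point $\infty$), contradicting the connectedness of $\dot E^*$. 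Since $\dot E^*\subseteq\hat{\mathbb C}$ is metrizable, this lets me take closures in $\hat{\mathbb C}$ freely and says that every nonempty clopen piece of $\dot E(f_a)$ reaches toward $\infty$.

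Now suppose toward a contradiction that $\dot E(f_a)=\bigcup_{n<\omega}C_n$ with each $C_n$ a nowhere dense C-set. The essential use of the C-set hypothesis is that it permits peeling the $C_n$ off one at a time by \emph{genuine} clopen sets: if $x\notin C_n$, then writing $C_n$ as an intersection of clopen sets gives a clopen $V\supseteq C_n$ with $x\notin V$, so $\dot E(f_a)\setminus V$ is a clopen neighborhood of $x$ missing $C_n$. I would run this inside the full endpoint set $E(f_a)$, which by \cite{31} is homeomorphic to $\mathfrak E_{\mathrm c}$ and is therefore Polish, with $\dot E(f_a)=E(f_a)\cap I(f_a)$ dense in it. Choosing the peeled-off witnesses at finite distance and shrinking diameters, I build a decreasing sequence of nonempty sets $W_0\supseteq W_1\supseteq\cdots$, each clopen in $\dot E(f_a)$ and disjoint from $C_0,\dots,C_n$, whose $E(f_a)$-closures are nested, complete, of diameter tending to $0$, and bounded away from $\infty$. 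Completeness of $E(f_a)$ then yields a single point $p$ in the intersection of these closures.

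The crux, and the step I expect to be the main obstacle, is to guarantee that $p$ is a genuine escaping endpoint, that is $p\in\dot E(f_a)$ and $p\notin C_n$ for every $n$; once this is secured, $p\in\dot E(f_a)\setminus\bigcup_n C_n=\varnothing$ is the desired contradiction. Two things must be arranged at once. To make $p$ avoid every $C_n$ I would peel not with arbitrary clopen sets but with traces of C-sets of $E(f_a)$ coming from its witness (zero-dimensional) topology; because such sets are closed in $E(f_a)$, the separation survives the closure operation and forces $p\notin\mathrm{cl}_{E(f_a)}(C_n)\supseteq C_n$. More delicately—and this is exactly where the explosion point is indispensable—I must prevent the construction from drifting into $\infty$ or onto the non-escaping endpoints $E(f_a)\setminus\dot E(f_a)$; the accumulation property of the first paragraph shows that every clopen piece is pulled toward $\infty$, and it is precisely this pull that has to be controlled to steer the nested pieces to a finite, escaping limit. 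This is the feature absent for $\mathfrak E$, which is meager in itself and admits no explosion point, and so the proof must be carried through here by the dynamical distinction between escaping and non-escaping endpoints together with the density and non-meagerness of $\dot E(f_a)$ in $E(f_a)$.
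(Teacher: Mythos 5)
Your proposal contains a genuine, fatal gap, and in fact an internal inconsistency. Your first paragraph correctly establishes that every nonempty clopen subset of $\dot E(f_a)$ has $\infty$ in its closure --- equivalently, every such set is unbounded in $\mathbb C$. But your second paragraph then asks for nonempty sets $W_n$, \emph{clopen in $\dot E(f_a)$}, whose closures in $E(f_a)$ have diameter tending to $0$ and are bounded away from $\infty$. By your own first observation no such sets exist: a nonempty clopen subset of $\dot E(f_a)$ cannot be bounded, let alone of small diameter, so the nested construction cannot even begin. Moreover, even if the peeling were somehow arranged so that a limit point $p$ exists in $E(f_a)\simeq \mathfrak E_{\mathrm{c}}$, nothing forces $p$ to be an \emph{escaping} endpoint: $\dot E(f_a)$ is $F_{\sigma\delta}$ and nowhere $G_{\delta\sigma}$, hence not Polish and not a $G_\delta$-subset of $E(f_a)$, so completeness of $E(f_a)$ gives no control over whether the limit lands in $\dot E(f_a)$, avoids every $C_n$, or falls onto a non-escaping endpoint. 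You identify this as ``the main obstacle'' but offer no mechanism to overcome it, and the explosion-point property alone cannot supply one.

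What is missing is the paper's central idea, which is dynamical and geometric rather than point-set theoretic. Working in the brush model $J(\mathcal F)$, the paper constructs a simple closed curve $\beta$ (the boundary of a union of nested boxes of widths $F^{-k^2}(1)$, controlled by the iteration, stretching, and logarithmic-orbit lemmas of Section 3) whose entire intersection with the Julia set lies in the escaping set, so that $\beta\cap E(\mathcal F)\subset \tilde E(\mathcal F)$. The Baire category argument is then run not inside $\dot E(f_a)$ or $E(f_a)$ but on the trace $\tau$ of the endpoint set on this curve, which \emph{is} completely metrizable (a relatively closed subset of a $G_\delta$-subset of the endpoint set) and, crucially, consists only of escaping endpoints --- this is exactly the ``steering'' your sketch needs and lacks. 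Finally, the contradiction runs in the opposite direction from your plan: rather than using small clopen sets as tools, the paper uses the curve together with the brush geometry to \emph{produce} a nonempty bounded relatively clopen subset of $\tilde E(\mathcal F)$ with countably many horizontal fibers removed, which is precisely what connectedness of that set together with $\{\infty\}$ forbids. Without a curve whose Julia-set trace is purely escaping, no version of your argument goes through.
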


\begin{uc}\label{t2}If $a\in F(f_a)$ then $\dot E(f_a)\not \simeq \mathfrak E$.
\end{uc}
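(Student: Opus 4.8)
The plan is to deduce Corollary \ref{t2} directly from Theorem \ref{t1} by isolating the decomposition property as a topological invariant that separates the two spaces. The discussion preceding Theorem \ref{t1} already records that
\[
\mathfrak E=\bigcup_{q\in\mathbb Q}\{\mathbf x\in\mathfrak E:x_0=q\}
\]
exhibits $\mathfrak E$ as a countable union of nowhere dense C-sets. Thus $\mathfrak E$ possesses this property, whereas by Theorem \ref{t1} the space $\dot E(f_a)$ does not. The only gap to fill is that the property in question is preserved under homeomorphism, after which the two spaces cannot be equivalent.

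First I would verify that ``being expressible as a countable union of nowhere dense C-sets'' is a topological invariant. Let $h:X\to Y$ be a homeomorphism. Since $h$ and $h^{-1}$ are continuous, $h$ carries clopen sets to clopen sets in both directions, and hence carries intersections of clopen sets to intersections of clopen sets; so the image of a C-set is again a C-set. Moreover $h[\overline A]=\overline{h[A]}$ and $h[\operatorname{int}(\overline A)]=\operatorname{int}(\overline{h[A]})$, so $h$ carries nowhere dense sets to nowhere dense sets. Finally $h$ commutes with countable unions. Therefore if $X=\bigcup_{n<\omega}A_n$ with each $A_n$ a nowhere dense C-set, then $Y=\bigcup_{n<\omega}h[A_n]$ is again such a decomposition.

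Combining these observations completes the argument. If $\dot E(f_a)\simeq\mathfrak E$ held, then the displayed decomposition of $\mathfrak E$ would transport under the homeomorphism to a decomposition of $\dot E(f_a)$ into countably many nowhere dense C-sets, contradicting Theorem \ref{t1}. Hence no such homeomorphism exists, and $\dot E(f_a)\not\simeq\mathfrak E$.

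Because all of the substantive work is carried by Theorem \ref{t1}, there is no genuine obstacle at this stage: the corollary is immediate once that theorem and the routine invariance check above are in place. I would expect the write-up to be only a few lines, essentially pointing back to the decomposition of $\mathfrak E$ and invoking Theorem \ref{t1}.
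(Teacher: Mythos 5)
Your proposal is correct and matches the paper's argument exactly: the paper establishes in the introduction that $\mathfrak E$ is a countable union of nowhere dense C-sets (via the sets $\{\mathbf x\in\mathfrak E:x_0=q\}$, $q\in\mathbb Q$) and then, as noted in Remark 6.3, deduces Corollary \ref{t2} from Theorem \ref{t1} precisely because this decomposition property is a topological invariant. Your explicit verification of the invariance is routine but sound, so there is nothing to add.
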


 Our proof of Theorem 1.1 will involve constructing  simple closed (Jordan) curves in $\mathbb C$ whose intersections with  $J(f_{-1})$ are contained in $I(f_{-1})$.


\begin{ut}\label{t3}If $a\in (-\infty,-1]$ then each point of $\mathbb C$ can be separated from $\infty$ by a simple closed (Jordan) curve in $F(f_a)\cup I(f_a)$. 
\end{ut}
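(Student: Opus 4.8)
The plan is to realize every separating curve as the boundary of a large axis-parallel rectangle
\[
\mathcal R=[-M,X]\times[(2k'+1)\pi,(2k+1)\pi],
\]
chosen so that $p\in\operatorname{int}\mathcal R$ and so that each edge of $\partial\mathcal R$ meets $J(f_a)$ only inside $I(f_a)$. Three of the edges will in fact lie entirely in $F(f_a)$, and for those the verification is elementary; the whole difficulty is concentrated in the right edge, where the rectangle must cut across the Cantor bouquet.

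First I would dispose of the three easy edges. For the horizontal line $\Gamma_k=\{\operatorname{Im}z=(2k+1)\pi\}$ one computes $f_a\bigl(x+(2k+1)\pi i\bigr)=-e^{x}+a$, so $f_a(\Gamma_k)=(-\infty,a)$. Since $a\le-1$ lies strictly below the repelling fixed point on the real axis (the parabolic fixed point $0$ when $a=-1$), the whole ray $(-\infty,a)$ lies in the attracting/parabolic basin, hence in $F(f_a)$; by complete invariance of the Fatou set, $\Gamma_k\subseteq f_a^{-1}(F(f_a))=F(f_a)$ for every $k$. So the top and bottom edges lie in $F(f_a)$. For the left edge, if $\operatorname{Re}z=-M$ then $f_a(z)\in D(a,e^{-M})$, and since $a\in F(f_a)$ and $F(f_a)$ is open, this disk is contained in $F(f_a)$ once $M$ is large; hence $\{\operatorname{Re}z=-M\}\subseteq F(f_a)$.

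The crux is the right edge $\{\operatorname{Re}z=X\}$, and here the point I would isolate is that there is a constant $R_a$ with
\[
J(f_a)\cap\{\operatorname{Re}z>R_a\}\subseteq I(f_a),
\]
i.e.\ every non-escaping point of the Julia set has bounded real part. Granting this, any $X>R_a$ yields a vertical right edge whose intersection with $J(f_a)$ consists of escaping points only, so that $\partial\mathcal R\subseteq F(f_a)\cup I(f_a)$; choosing $M$, $X$, $k$, and $-k'$ large enough then places $p$ in $\operatorname{int}\mathcal R$, so $\partial\mathcal R$ separates $p$ from $\infty$ and the proof is complete.

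To establish the boxed claim I would argue dynamically. The left-edge estimate shows $J(f_a)\subseteq\{\operatorname{Re}z>-C\}$ for some $C$, so real parts are bounded below along any orbit in $J$. If $\operatorname{Re}z$ is large then $f_a(z)=e^{z}+a$ has modulus roughly $e^{\operatorname{Re}z}$: when $\cos(\operatorname{Im}z)$ is bounded away from $0$, the next real part is either huge and positive (forcing fast escape) or very negative (forcing entry into a small disk about $a$, which is impossible in $J$). The genuinely delicate case is $\operatorname{Im}z$ near an odd multiple of $\pi/2$, where $z$ has large real part but $f_a(z)$ has only moderate real part and enormous imaginary part, so the orbit can re-enter the bouquet; controlling the ensuing itinerary is exactly what the escaping-rate estimates for the exponential family supply. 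I would therefore invoke the standard description of $J(f_a)$ for $a\le-1$ as a Cantor bouquet whose hairs are escaping rays and whose endpoints escape once their external address grows quickly, so that the non-escaping points correspond to slowly growing addresses and hence to hairs anchored at uniformly bounded real part. This uniform bound on the real parts of the slow endpoints is the main obstacle, and is where I expect to lean hardest on the quantitative estimates of Schleicher--Zimmer and on the straight-brush model of Aarts--Oversteegen.
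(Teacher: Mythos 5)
Your three ``easy'' edges are fine: the computation $f_a(x+(2k+1)\pi i)=-e^x+a\in(-\infty,a)\subset F(f_a)$ and the left-edge estimate $f_a(\{\operatorname{Re}z=-M\})\subset D(a,e^{-M})\subset F(f_a)$ are both correct, and they play the same role as the three Fatou sides of the square in the paper's construction. The proof collapses, however, at exactly the step you identify as the crux. The claim that there is a constant $R_a$ with $J(f_a)\cap\{\operatorname{Re}z>R_a\}\subseteq I(f_a)$ is false. The map $f_a$ has infinitely many fixed points (solutions of $e^z=z-a$), forming a sequence $z_k$ with $\operatorname{Im}z_k\approx 2\pi k$ and $\operatorname{Re}z_k\approx\ln(2\pi k)\to\infty$; every one of them except the real attracting (or parabolic) fixed point lies in $J(f_a)$, and fixed points certainly do not escape. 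So $J(f_a)\cap\{\operatorname{Re}z>R\}$ contains non-escaping points for every $R$. One sees the same thing in the straight-brush model: for the constant address $\underline s=NNN\cdots$, the endpoint of the corresponding hair is a fixed point of $\mathcal F$ with first coordinate at least $F^{-1}(2\pi N)=\ln(2\pi N+1)\to\infty$. Your guiding heuristic --- that slowly growing addresses are ``anchored at uniformly bounded real part'' --- is exactly backwards: slow addresses with large entries are anchored arbitrarily far to the right. No quantitative estimates from the literature can rescue a statement that is false, so the right edge of your rectangle cannot be justified this way.

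Nor is the gap easily patched by choosing the abscissa $X$ cleverly rather than uniformly: since repelling periodic points are dense in $J(f_a)$, the non-escaping points are dense in the bouquet, and to run a \emph{straight} vertical segment through their complement you would need the projection of the non-escaping set to the real axis to omit some value --- a delicate measure/category question you have not addressed and which there is no reason to believe has a favorable answer. This is precisely why the paper's proof is structured differently: it keeps three straight Fatou sides, but the fourth side is \emph{not} a line. It is the uniform limit of curves threading through a recursively constructed family of boxes whose widths shrink like $F^{-k^2}(1)$ (the box construction of Section 5 and Claim \ref{t9}), and any Julia-set point $x$ on the limiting curve is forced to escape by the quantitative stretching estimates: at every scale $k$ it lies within $F^{-k^2}(1)$, in the hair direction, of a point far out on a hair, which by Lemma \ref{t6} gives $T(\mathcal F^{2k^2}(x))\geq F^{k^2}(1)$, and Lemma \ref{t5} then forces $T(\mathcal F^n(x))\to\infty$. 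The oscillation of the curve at all scales, dodging the (dense) non-escaping points while staying quantitatively close to points deep inside hairs, is the essential idea that your proposal is missing.
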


Note that the curve in Theorem 1.3 avoids all non-escaping endpoints of $J(f_a)$.  

\begin{uc}\label{t4}If $a\in (-\infty,-1]$ then $F(f_a)\cup I(f_a)$ is path-connected.\end{uc}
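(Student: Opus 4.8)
The plan is to fix a convenient basepoint $b$ in the Fatou set $F:=F(f_a)$ and show that every point of $X:=F(f_a)\cup I(f_a)$ can be joined to $b$ by a path lying in $X$; since this makes any two points of $X$ joinable through $b$, it yields path-connectedness. Writing $N:=E(f_a)\setminus I(f_a)$ for the set of non-escaping endpoints, one has $X=\mathbb C\setminus N$. The Fatou set is the (attracting, resp.\ parabolic) basin of the finite fixed point, hence an open connected — and therefore path-connected — subset of $\mathbb C$; so points of $X$ already lying in $F$ are handled at once, and the whole problem reduces to joining an arbitrary escaping point $z\in I(f_a)$ to $F$ within $X$.

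For such a $z$ I would invoke Theorem 1.3 to obtain a Jordan curve $\gamma\subseteq X$ separating $z$ from $\infty$; let $D$ be the bounded complementary disk, so that $z\in D$ while $\infty$ lies in the unbounded complementary component. The first key point is that $\gamma$ must meet $F$: since the arc-components of the Cantor bouquet $J(f_a)$ are its individual hairs, each homeomorphic to $[0,\infty)$, the set $J(f_a)$ — and hence $I(f_a)\subseteq J(f_a)$ — contains no Jordan curve; as $\gamma\subseteq F\cup I(f_a)$ therefore cannot lie inside $I(f_a)$, it meets $F$ in some point $w$.

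The crux is then to connect $z$ to $\gamma$ inside $X$, i.e.\ avoiding the troublesome set $N$. Here I would use the hair $h$ of the Cantor bouquet through $z$, parameterised as $h\colon[0,\infty)\hookrightarrow J(f_a)$ with $h(0)$ the endpoint and $h(s)\to\infty$; write $z=h(s_0)$. The decisive observation is to travel \emph{outward}, along increasing $s$: because $h$ tends to $\infty$ while $D$ is bounded, there is a first parameter $s_1>s_0$ with $x:=h(s_1)\in\gamma$, and $h(s)\in D$ for $s\in[s_0,s_1)$. Every point of the subarc $h|_{[s_0,s_1]}$ other than $z$ is a non-endpoint point of $h$, hence an escaping point lying in $I(f_a)\subseteq X$; the endpoint $h(0)$ — the only point of $h$ that might fail to escape — is avoided precisely because we moved away from it. Together with $z\in X$ and $x\in\gamma\subseteq X$, this exhibits $h|_{[s_0,s_1]}$ as a path in $X$ from $z$ to the point $x\in\gamma$.

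Concatenating finishes the argument: follow the hair from $z$ to $x\in\gamma$, then an arc of $\gamma$ (which lies in $X$) from $x$ to $w\in F$, and finally a path inside the path-connected set $F$ from $w$ to $b$. I expect the main obstacle — and the reason Theorem 1.3 is indispensable — to be exactly the transition from the hair to the Fatou set: the hair never leaves $J(f_a)$, so without the bridging curve $\gamma$ supplied by Theorem 1.3 there is no path in $F\cup I(f_a)$ onto which the escaping ray can be transferred. Care must also be taken to justify the two structural inputs used above, namely that the arc-components of $J(f_a)$ are the hairs (so that $\gamma$ genuinely meets $F$) and that every non-endpoint point of a hair escapes; both are standard features of the Cantor-bouquet description of $J(f_a)$ for $a\in(-\infty,-1]$.
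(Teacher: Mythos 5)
Your proof is correct and takes essentially the same route as the paper's: a Jordan curve supplied by Theorem 1.3 bridges the hair through the escaping point to the path-connected Fatou set, using the fact that non-endpoint points of hairs lie in $I(f_a)$. Your reduction to a fixed basepoint in $F(f_a)$, and your explicit justifications that the curve must meet $F(f_a)$ (no Jordan curve fits inside a hair) and that the hair must cross the curve (it tends to $\infty$), are only presentational refinements of the paper's three-case argument, which asserts the existence of these intersection points without elaboration.
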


We observe that  $F(f_{-1})\cup I(f_{-1})$ is simply the set of all points $z\in \mathbb C$ such that $f_{-1}^n(z)\to 0$ or $f_{-1}^n(z)\to\infty$. 







 \section{Preliminaries}
 \subsection{Outline of paper}

 We will prove Theorem 1.1 and Corollary 1.2  for the particular mapping  $$f(z):=f_{-1}(z)=e^z-1.$$ The proof for all other Fatou parameters will then follow from the conjugacy  \cite[Theorem 1.2]{rem2}. Theorem 1.3 and Corollary 1.4 will essentially follow from the case $a=-1$ as well.

Instead of working directly in the Julia set $J(f)\subset \mathbb C$, we will work in a topologically equivalent subset of $\mathbb R^2$ known as a brush. In this section we will give the definition of a brush and its connection  to exponential Julia sets. We will then define a specific brush which is homeomorphic to $J(f)$, together with a mapping which models $f$.

In Section 3 we will prove three easy lemmas for the model mapping. In Section 4 we will show that certain endpoint sets in brushes cannot be written as countable unions of nowhere dense C-sets, given the existence of certain Jordan curves. In Section 5 we will prove Theorem 1.1 by showing that such curves exist with respect to our model of $\dot E(f)$.  Theorem 1.1 is strengthened by Remark 6.2 in Section 6.  Finally, the proofs of Theorem 1.3 and Corollary 1.4 are given in Section 7. 

\subsection{Brushes and Cantor bouquets}Let $\mathbb P=\mathbb R\setminus \mathbb Q$. A \textit{brush} is a closed subset of $\mathbb R^2$ of the form $$B=\bigcup_{y\in Y} [t_y,\infty)\times \{y\},$$ where $Y\subset \mathbb P$ and  $t_y\in \mathbb R$.   The \textit{endpoints} of $B$ are  the points $\langle t_y,y\rangle$. 

When $a\in (-\infty,-1]$, the Julia set $J(f_a)$ is a \textit{Cantor bouquet} \cite{aa,rem} which is homeomorphic to a brush with a dense set of endpoints. For all other Fatou parameters, $J(f_a)$ is a \textit{pinched Cantor bouquet} that is homeomorphic to a brush modulo a closed equivalence relation on its set of endpoints; see \cite[Corollary 9.3]{rem2} and \cite{geo}. The relation establishes a one-to-one correspondence between endpoints of $J(f)$ and equivalence classes of endpoints of $B$ \cite[Section 3.2]{lip2}.

\subsection{Brush model of $J(f)$}We will now define a mapping $\mathcal F$ and a brush $J(\mathcal F)$ that model $f$ and $J(\mathcal F)$. These objects were studied extensively in \cite{rem2,rem}.

Let  $\mathbb Z ^\omega$ denote the space of  integer sequences $\underline{s}=s_0s_1s_2\ldots$ in the product (or lexicographic order) topology.  Define 
$\mathcal F:[0,\infty)\times \mathbb Z ^\omega\to \mathbb R\times \mathbb Z ^\omega$ by $$\langle t,\underline{s}\rangle\mapsto \langle F(t)-2\pi|s_0|,\sigma(\underline{s})\rangle,$$ where $F(t)=e^t-1$ and $\sigma$ is the shift map on $\mathbb Z ^\omega$; i.e.\  $$\sigma(s_0s_1s_2\ldots)=s_1s_2s_3\ldots.$$    
For each $x=\langle t,\underline{s}\rangle\in [0,\infty)\times \mathbb Z ^\omega$ put $T(x)=t$ and $\underline s(x)=\underline{s}$. The integer sequence $\underline{s}(x)$ is called the \textit{external address} of $x$.

Define  $$J(\mathcal F)=\{x\in [0,\infty)\times \mathbb Z ^\omega:T(\mathcal F^n(x))\geq 0\text{ for all }n\geq 0\}.$$ Continuity of $\mathcal F$ implies that  $J(\mathcal F)$ is closed in  $[0,\infty)\times \mathbb Z ^\omega$. Now let $$\mathbb S=\{\underline s\in \mathbb Z^\omega:\text{ there exists } t\geq 0\text{ such that }\langle t,\underline{s}\rangle\in J(\mathcal F)\},$$  and for each $\underline s\in \mathbb S$ put $t_{\underline{s}}=\min\{t\geq 0:\langle t,\underline{s}\rangle\in J(\mathcal F)\}.$ Then $$J(\mathcal F)=\bigcup_{\underline s\in \mathbb S} [t_{\underline s},\infty)\times \{\underline s\}.$$  Finally,   $\mathbb Z^\omega$ can be identified with  $\mathbb P$ using an order isomorphism between  $\mathbb Z ^\omega$ in the lexicographic ordering and $\mathbb P$ in the real  ordering \cite[Observation 3.2]{rem}. Under this identification,  $J(\mathcal F)\subset \mathbb R\times \mathbb P$. Hence $J(\mathcal F)$ is a brush.

 \begin{figure}[h]
 \includegraphics[scale=0.33]{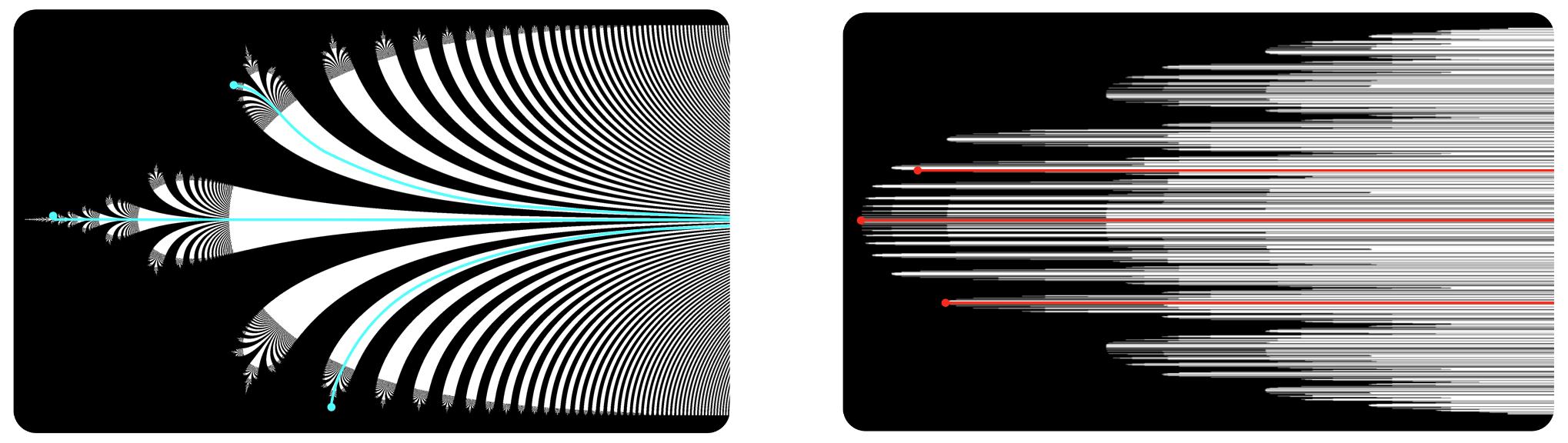}
 \caption{Three endpoints and curves in $J(f)$ (left) and in $J(\mathcal F)$ (right).}
 \end{figure}

The restrictions $\mathcal F\restriction J(\mathcal F)$ and $f\restriction J(f)$ are topologically conjugate \cite[Section 9]{rem2}. Hence $J(\mathcal F)$ is homeomorphic to $J(f)$, and the action of $\mathcal F$ on $J(\mathcal F)$ captures the essential dynamics of $f$ on $J(f)$; see also  \cite[p.74]{rem}.

\section{Lemmas  for $\mathcal F$} 

In order to prove Theorem 1.1 we need a few basic lemmas regarding the model  above. Define $F^{-1}(t)=\ln(t+1)$ for $t\geq 0$, so that  $F^{-1}$ is the inverse of $F$.   For each $n\geq 1$, the $n$-fold composition of $F^{-1}$  is denoted $F^{-n}$.

\begin{ul}[Iterating between double squares]\label{t5}Let $x\in J(\mathcal F)$. If $k\geq 5$ and $$T(\mathcal F^{2k^2}(x))\geq F^{k^2}(1),$$ then $T(\mathcal F^{n}(x))\geq F^{k}(1)$ for all $n\in [2(k-1)^2,2k^2]$.\end{ul}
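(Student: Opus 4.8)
The plan is to reduce everything to the behavior of the single real quantity $t_n := T(\mathcal F^n(x))$ and to exploit that the shift term in the definition of $\mathcal F$ can only decrease it. First I would record that, writing the external address as $\underline s(\mathcal F^n(x)) = s_0 s_1 s_2 \ldots$, the definition of $\mathcal F$ gives the recursion $t_{n+1} = F(t_n) - 2\pi|s_0|$. Since $x \in J(\mathcal F)$, every $t_n$ is nonnegative, and since the subtracted term $2\pi|s_0|$ is nonnegative we obtain the one-step domination $t_{n+1} \le F(t_n)$. Applying the increasing inverse $F^{-1}(t)=\ln(t+1)$, which maps $[0,\infty)$ into itself, yields the backward bound $t_n \ge F^{-1}(t_{n+1})$.

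The second step is to telescope this backward bound. Because $F^{-1}$ is monotone and carries $[0,\infty)$ to $[0,\infty)$, a straightforward induction gives $t_n \ge F^{-m}(t_{n+m})$ for every $m \ge 0$, with all intermediate values staying nonnegative so that each application of $F^{-1}$ is legitimate. I would then apply this with the terminal index $2k^2$: for any $n$ in the target interval, set $m = 2k^2 - n \ge 0$ and combine the hypothesis $t_{2k^2} \ge F^{k^2}(1)$ with monotonicity of $F^{-m}$ to get
$$t_n \ge F^{-m}(t_{2k^2}) \ge F^{-(2k^2-n)}\!\left(F^{k^2}(1)\right) = F^{\,n-k^2}(1).$$

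The final step is purely arithmetic: it suffices to show $n - k^2 \ge k$ throughout the interval, since $F$ is increasing and this upgrades the bound to $t_n \ge F^k(1)$. The smallest admissible $n$ is $2(k-1)^2 = 2k^2 - 4k + 2$, so the requirement $n \ge k^2 + k$ reduces to $k^2 - 5k + 2 \ge 0$, and this is exactly where the standing hypothesis $k \ge 5$ enters, as the larger root of $k^2 - 5k + 2$ is $(5+\sqrt{17})/2 < 5$. I do not expect a serious obstacle in this argument: it is essentially a backward telescoping of the inequality $t_{n+1} \le F(t_n)$. The only points demanding genuine care are keeping the exponents of $F$ and $F^{-1}$ aligned so that $F^{-m}\circ F^{k^2} = F^{\,k^2 - m}$ on the nonnegative reals, and verifying that the threshold $k \ge 5$ is precisely what makes the endpoint inequality $2(k-1)^2 - k^2 \ge k$ hold.
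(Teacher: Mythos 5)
Your proposal is correct and is essentially the paper's own argument: both rest on the one-step domination $T(\mathcal F(y))\le F(T(y))$ (the paper states its iterated form $F^i(T(\mathcal F^{2k^2-i}(x)))\ge T(\mathcal F^{2k^2}(x))$ directly, you telescope it backwards via $F^{-1}$), followed by the identical arithmetic that $k^2-i\ge k$ for $i\le 4k-2$ once $k\ge 5$. The only difference is presentational—forward iteration of $F$ then inversion, versus stepwise inversion—so there is nothing substantive to distinguish the two.
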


\begin{proof}Suppose $k\geq 5$ and $T(\mathcal F^{2k^2}(x))\geq F^{k^2}(1)$. Let $n\in [2(k-1)^2,2k^2]$. Then there exists $i\leq 4k-2$ such that $n=2k^2-i$.  We have 
$$F^i(T(\mathcal F^{n}(x)))=F^i(T(\mathcal F^{2k^2-i}(x)))\geq T(\mathcal F ^i (\mathcal F^{2k^2-i}(x)))=T(\mathcal F ^{2k^2}(x))\geq F^{k^2}(1).$$
Applying $F^{-i}$ to each side of the inequality shows that $$T(\mathcal F^{n}(x))\geq F^{k^2-i}(1)> F^{k}(1),$$ where we used  the fact   $k^2-i>k$ for all $k\geq 5$ and $i\leq 4k-2$.
\end{proof}

\begin{ul}[Forward stretching]\label{t6}Let $x,y\in J(\mathcal F)$.  If $\underline s(y)=\underline s(x)$ and $T(y)>T(x)$, then for every $n\geq 1$ we have $T(\mathcal F^n(y))\geq F^n(T(y)-T(x)).$
\end{ul}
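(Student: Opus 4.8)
The plan is to track how the $T$-coordinate of the orbit of $y$ pulls away from that of $x$ under iteration, using the fact that equal external addresses force both orbits to undergo the same translation at every step. First I would fix notation: write $t_n=T(\mathcal F^n(x))$ and $u_n=T(\mathcal F^n(y))$, and set $d_n=u_n-t_n$, so that $d_0=T(y)-T(x)>0$ and the desired conclusion becomes $u_n\geq F^n(d_0)$. Since $x,y\in J(\mathcal F)$, every $t_n$ and $u_n$ is nonnegative, a fact I will use repeatedly.

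The first step is to derive the recursion governing $d_n$. Because $\underline s(y)=\underline s(x)$, the iterated addresses $\sigma^n(\underline s(x))$ and $\sigma^n(\underline s(y))$ coincide for all $n$; in particular their leading entries agree, so one application of $\mathcal F$ subtracts the same quantity $2\pi|s_0|$ from both $T$-coordinates. That translation therefore cancels in the difference, giving
$$d_{n+1}=u_{n+1}-t_{n+1}=\bigl(F(u_n)-2\pi|s_0|\bigr)-\bigl(F(t_n)-2\pi|s_0|\bigr)=F(u_n)-F(t_n).$$

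The heart of the argument, and the step I expect to be the only delicate point, is a super-additivity estimate for $F$ on the nonnegative reals: for all $a\geq b\geq 0$,
$$F(a)-F(b)\geq F(a-b).$$
This is where the specific exponential form $F(t)=e^t-1$ is essential, and it follows at once from the factorization $(e^a-e^b)-(e^{a-b}-1)=(e^b-1)(e^{a-b}-1)\geq 0$, valid precisely because $b\geq 0$ and $a\geq b$. Applying this with $a=u_n$ and $b=t_n$ (which requires $u_n\geq t_n\geq 0$) converts the recursion into the one-step stretching bound $d_{n+1}\geq F(d_n)$.

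Finally I would close the induction. Since $F$ is increasing with $F(0)=0$, and $d_0>0$, the bound $d_{n+1}\geq F(d_n)$ together with monotonicity of $F$ propagates $d_n\geq F^n(d_0)>0$; in particular $d_n>0$ keeps $u_n>t_n\geq 0$ at every stage, which is exactly what validates the use of the super-additivity estimate at each step. Because $t_n\geq 0$, I then conclude $u_n=d_n+t_n\geq d_n\geq F^n(d_0)=F^n(T(y)-T(x))$, which is the claimed inequality for all $n\geq 1$.
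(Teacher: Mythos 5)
Your proof is correct, but it takes a genuinely different route from the paper's. The paper's proof is a three-line argument that delegates the key estimate to a citation: it sets $\varepsilon=T(y)-T(x)$, $\delta=T(\mathcal F^n(y))-T(\mathcal F^n(x))$, invokes \cite[Observation 3.9]{rem} to get $T(y)\leq T(x)+F^{-n}(\delta)$, inverts this to $\delta\geq F^n(\varepsilon)$, and finishes exactly as you do, via $T(\mathcal F^n(y))\geq \delta$ because $T(\mathcal F^n(x))\geq 0$. You instead prove the expansion estimate from scratch: equal external addresses make the translations $2\pi|s_n|$ cancel in the difference $d_n$, giving $d_{n+1}=F(u_n)-F(t_n)$, and the super-additivity $F(a)-F(b)\geq F(a-b)$ for $a\geq b\geq 0$ (via the factorization $(e^b-1)(e^{a-b}-1)\geq 0$) closes the induction $d_n\geq F^n(d_0)$. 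Your version is self-contained and elementary, uses only the definition of $\mathcal F$, and makes explicit exactly where the hypotheses enter: $x\in J(\mathcal F)$ guarantees $t_n\geq 0$ (needed both for super-additivity and for the last step $u_n\geq d_n$), and the positivity $d_n>0$ propagated by the induction is what keeps the estimate applicable at every stage --- a point the paper compresses into ``Note that $\delta>0$.'' The paper's version is shorter and reuses an estimate already established in the literature in greater generality. One trivial slip: in your displayed recursion the subtracted quantity at step $n$ is $2\pi|s_n|$ (the leading entry of $\sigma^n(\underline s)$), not $2\pi|s_0|$; since it is the same for both orbits and cancels, nothing is affected.
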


\begin{proof}Suppose $\underline s(y)=\underline s(x)$ and $T(y)> T(x)$. Let $\varepsilon=T(y)-T(x)$ and $\delta=T(\mathcal F^n(y))-T(\mathcal F^n(x))$.  Note that  $\delta>0$.  So $$T(y)\leq T(x)+F^{-n}(\delta)$$ by \cite[Observation 3.9]{rem}.  Therefore  $\delta \geq F^n(\varepsilon)$.  We have $$T(\mathcal F^n(y))\geq T(\mathcal F^n(y))-T(\mathcal F^n(x))=\delta \geq F^n(\varepsilon) =F^n(T(y)-T(x))$$ as desired.
\end{proof}

Lemma \ref{t5} implies that  $T(\mathcal F^n(x))\to\infty$  if the double square iterates $T(\mathcal F^{2k^2}(x))$   increase at a sufficient rate. Lemma \ref{t6} will be used to  show that  $T(\mathcal F^{2k^2}(x))$ increases at that rate for certain points $x\in J(\mathcal F)$.

\begin{ul}[Logarithmic orbit]\label{t7}$F^{-n}(1)<3/n$ for all $n\geq 1$.\end{ul}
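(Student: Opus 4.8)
The plan is to prove the bound $F^{-n}(1) < 3/n$ by understanding the recursion $a_{n} = F^{-1}(a_{n-1}) = \ln(a_{n-1}+1)$ with $a_0 = 1$. The sequence $a_n = F^{-n}(1)$ is strictly decreasing and tends to $0$, and the claim is that it decays like $1/n$ with constant at most $3$. First I would compute the asymptotics directly: for small $x$ we have $\ln(1+x) = x - x^2/2 + O(x^3)$, so the recursion behaves like $a_n \approx a_{n-1} - a_{n-1}^2/2$. The standard trick for such a recursion is to pass to reciprocals. Setting $b_n = 1/a_n$, one finds
\[
b_n - b_{n-1} = \frac{1}{a_n} - \frac{1}{a_{n-1}} = \frac{a_{n-1}-a_n}{a_n a_{n-1}},
\]
and since $a_{n-1} - a_n = a_{n-1} - \ln(1+a_{n-1}) \approx a_{n-1}^2/2$, this difference is approximately $1/2$. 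So $b_n$ grows roughly linearly with slope $1/2$, giving $a_n \approx 2/n$, which is comfortably below $3/n$.

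To turn this heuristic into a rigorous proof, I would proceed by induction on $n$, aiming to show $a_n < 3/n$ directly. The base cases for small $n$ (say up to some explicit $n_0$) can be checked by hand, using the fact that $a_n < 1$ for all $n \geq 1$ and the monotonicity of $F^{-1}$. For the inductive step, assuming $a_{n-1} < 3/(n-1)$, I would use the elementary inequality $\ln(1+x) \leq x - x^2/2 + x^3/3$ (or a simpler one-sided bound like $\ln(1+x) \le x - x^2/3$ valid for $x \in (0,1)$) together with monotonicity of $F^{-1}$ to control $a_n = \ln(1 + a_{n-1})$. The key algebraic task is to verify that the resulting upper bound on $a_n$ is strictly less than $3/n$. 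Concretely, it suffices to show $\ln(1 + 3/(n-1)) < 3/n$ for all $n$ past the base cases, since $a_n = F^{-1}(a_{n-1})$ is increasing in $a_{n-1}$ and $a_{n-1} < 3/(n-1)$.

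The main obstacle I anticipate is controlling the induction near small $n$, where $a_n$ is not yet small and the crude logarithmic inequalities are loosest; the constant $3$ (rather than the asymptotically correct $2$) presumably provides exactly the slack needed to absorb this initial regime. I would therefore verify the first several values numerically or via direct estimates of $\ln(1+x)$, and only invoke the asymptotic reciprocal argument once $a_{n-1}$ is small enough that the quadratic correction dominates. An alternative, cleaner route that avoids case analysis is to prove the stronger reciprocal statement $1/a_n \geq n/3$ by induction: this reduces to showing $1/\ln(1+a_{n-1}) \geq 1/a_{n-1} + 1/3$, i.e. a uniform lower bound of $1/3$ on the reciprocal increment $b_n - b_{n-1}$, which follows from the expansion above and holds for all $a_{n-1} \in (0,1)$. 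This formulation is attractive because the increment bound is monotone-friendly and sidesteps the need to track the exact decay rate.
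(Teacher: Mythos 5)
Your primary argument is exactly the paper's proof: induct on $n$, use monotonicity of $F^{-1}$, and reduce the inductive step to the single inequality $\ln(1+3/n)<3/(n+1)$, which the paper dispatches ``by calculus'' with $n=1$ as the only base case. Your worry about needing extra base cases for small $n$ is unfounded: substituting $x=3/n$, the needed inequality becomes $\ln(1+x)<3x/(3+x)$ for $x\in(0,3]$, and the function $\psi(x)=3x/(3+x)-\ln(1+x)$ satisfies $\psi(0)=0$ and $\psi'(x)=x(3-x)/\bigl((3+x)^2(1+x)\bigr)>0$ on $(0,3)$, so it holds for every $n\geq 1$ with no case analysis. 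Your alternative reciprocal route is a genuinely different and also correct proof: with $b_n=1/F^{-n}(1)$ one needs $1/\ln(1+x)-1/x\geq 1/3$ for $x\in(0,1]$, which holds since this quantity decreases from $1/2$ (as $x\to 0^+$) to $1/\ln 2-1\approx 0.44$ at $x=1$; then $b_n\geq 1+n/3>n/3$ gives the claim. That version is arguably more robust (it yields the sharper asymptotic constant $2$ with more work, and avoids guessing the right closed-form inequality), at the cost of having to prove a uniform bound on the increment rather than a one-variable inequality; either route suffices for the paper's purpose, which only needs summability of $F^{-k^2}(1)$.
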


\begin{proof}The proof is by induction on $n$. If $n=1$ then we have $F^{-n}(1)=F^{-1}(1)=\ln(2)<3=3/n$.  Now suppose the inequality holds for a given $n$.  Then $$ F^{-(n+1)}(1)=F^{-1}(F^{-n}(1))<F^{-1}(3/n)=\ln\left(\frac{3}{n}+1\right)<\frac{3}{n+1}$$ by  calculus.\end{proof}

Although the series $\sum_{k=1}^\infty F^{-k}(1)$   diverges, Lemma \ref{t7} implies that $\sum_{k=1}^\infty F^{-k^2}(1)$ converges (to something less than $\pi ^2/2$). 

\section{Jordan curve lemma}

The following   topological lemma  will also be required to prove Theorem \ref{t1}. Recall that a \textit{C-set} in a  space $X$ is an intersection of clopen subsets of $X$. 

 \begin{ul}\label{lem}Let $$B=\bigcup_{y\in Y} [t_y,\infty)\times \{y\}$$  be a brush, and let $E=\{\langle t_y,y\rangle:y\in Y\}$ denote the set of endpoints of $B$. Suppose $\tilde E\subset E$ and  $\tilde E\cup \{\infty\}$ is connected.  If there exists an open set $U\subset \mathbb R ^2$ such that
\begin{itemize}
\item $U\cap \tilde E\neq\varnothing$, 
\item  $\partial U$ is a simple closed curve, and 
\item $\partial U\cap E\subset \tilde E$, 
\end{itemize} then $\tilde E$ cannot be written as a countable union of nowhere dense C-sets. 
\end{ul}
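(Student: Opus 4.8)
The plan is to argue by contradiction: assuming $\tilde E=\bigcup_{n<\omega}N_n$ with each $N_n$ a nowhere dense C-set, I will manufacture a nonempty proper clopen subset of $\tilde E\cup\{\infty\}$, contradicting the hypothesis that $\tilde E\cup\{\infty\}$ is connected.

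First I would extract the separation supplied by the Jordan curve. Working in the sphere $S^2=\mathbb R^2\cup\{\infty\}$, the curve $\partial U$ splits $S^2$ into two Jordan domains; taking $U$ to be the bounded one (the case relevant to the applications, where a point of $\tilde E$ is separated from $\infty$), we have $\infty\notin\overline U$. Put $A=\tilde E\cap U$; by hypothesis $A\neq\varnothing$, and $A$ is a bounded, relatively open subset of $\tilde E$. Since $\overline U\subseteq U\cup\partial U$ and $\partial U\cap E\subseteq\tilde E$, the relative frontier satisfies $\partial_{\tilde E}A\subseteq\partial U\cap\tilde E=\partial U\cap E=:K$. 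The key reduction is then: \emph{it suffices to produce a nonempty subset $V\subseteq A$ that is clopen in $\tilde E$.} Indeed, such a $V$ is bounded (being contained in $U$), so $\infty\notin\overline V$, whence $V$ is simultaneously open and closed in $\tilde E\cup\{\infty\}$ and omits $\infty$; this nontrivial clopen set contradicts connectedness. Connectedness also shows that $A$ itself is not clopen, i.e. $\partial_{\tilde E}A\neq\varnothing$, so we genuinely must shrink $A$.

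The second ingredient is the brush geometry, which furnishes clopen sets. Since each height $y\in Y$ carries exactly one endpoint, the height projection $E\to Y\subseteq\mathbb P$ is injective, and for any level $c\notin Y$ the horizontal line $\{y=c\}$ misses $B$ entirely, so $\{y<c\}\cap\tilde E$ is clopen in $\tilde E$. Thus $\tilde E$ is totally separated and carries an abundance of horizontal-strip clopen sets. If the heights occurring in $\partial_{\tilde E}A$ stayed away from the height of some point of $A$, a single strip would already trim $A$ to a nonempty clopen subset. The whole difficulty is that these frontier heights may accumulate at every level, because $\partial_{\tilde E}A$ need not be closed in $\mathbb R^2$; this is precisely the point at which the decomposition hypothesis must enter.

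The main step, and the principal obstacle, is to trim $A$ using the cover $\partial_{\tilde E}A\subseteq K\subseteq\bigcup_n N_n$. I would build a clopen set $C\supseteq\partial_{\tilde E}A$ that misses at least one point of $A$, and then take $V=A\setminus C=A\cap(\tilde E\setminus C)$; the inclusion $\partial_{\tilde E}A\subseteq C$ makes $V$ clopen, and $A\not\subseteq C$ makes it nonempty. To construct $C$ I would recurse on $n$, handling $N_n\cap(\text{current frontier})$ at stage $n$: because $N_n$ is a C-set it can be enclosed in a clopen set missing a prescribed disjoint compact set, and because $N_n$ is nowhere dense its removal leaves a dense open remainder, so a nonempty open core of $A$ can be kept alive throughout. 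The delicate part is carrying out these countably many clopen enlargements so that their union is again \emph{closed} (hence clopen) while a nonempty core survives in the limit; in particular one cannot simply fix a core point $a^*\in A$ in advance, since $a^*$ itself lies in some $N_m$ and so cannot be clopen-separated from $N_m$. Reconciling the closedness forced by the infinite process with the persistence of the core — balancing the C-set property (for clopen separation) against nowhere density (for nonemptiness) — is where the accumulation of frontier heights is finally defeated. Once $C$ is obtained, $V=A\setminus C$ is the required nonempty bounded clopen subset of $\tilde E$, completing the contradiction.
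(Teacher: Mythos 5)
Your overall frame is the same as the paper's---assume $\tilde E=\bigcup_n N_n$ with each $N_n$ a nowhere dense C-set, and contradict connectedness of $\tilde E\cup\{\infty\}$ by producing a nonempty \emph{bounded} clopen subset of $\tilde E$---and your ``key reduction'' is correct. But the proposal has a genuine gap exactly where you flag it: the construction of a clopen $C\supseteq\partial_{\tilde E}A$ that misses a point of $A$ is never carried out, and the recursion you sketch cannot work as described. A countable union of clopen sets need not be closed, and, as you yourself observe, no core point can be protected in advance, since any candidate $a^*\in A$ lies in some $N_m$ and therefore cannot be clopen-separated from $N_m$ at stage $m$. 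Saying that ``reconciling'' these two demands ``is where the accumulation of frontier heights is finally defeated'' is a restatement of the difficulty, not an argument; producing such a $C$ is essentially equivalent to the conclusion of the lemma, so what remains unproved is the entire content of the lemma.

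The idea that closes this gap in the paper is Baire category, which never appears in your proposal: instead of covering the whole frontier by clopen sets, the paper localizes to a \emph{single} $C_n$, so that only one clopen separation is ever needed. Concretely, one first discards the countable set $\mathbb R\times A$ of heights $y$ for which $\partial U$ contains an interval of the brush line $[t_y,\infty)\times\{y\}$ (connectedness of $\tilde E\setminus(\mathbb R\times A)\cup\{\infty\}$ survives, by a cited theorem on cohesive almost zero-dimensional spaces); this deletion is also what later guarantees that vertical segments crossing $\partial U$ can be found near the chosen endpoint. Next one forms the ``crossing set'' $\tau=\overline{U\cap\tilde E}\cap\overline{V\cap\tilde E}\cap\tilde E\setminus(\mathbb R\times A)$, where $V$ is the unbounded complementary domain; $\tau$ is nonempty by connectedness, and is completely metrizable because it is relatively closed in a $G_\delta$-subset of $E$ and $E$ is $G_\delta$ in $\mathbb R\times\mathbb P$. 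Baire's theorem then yields an open rectangle $R$ and a single index $n$ with $\varnothing\neq\tau\cap R\subseteq C_n$. Finally, one picks an endpoint $z\in U\cap\tilde E\cap R$ outside $C_n\cup(\mathbb R\times A)$ (possible by nowhere density), a relatively clopen $O\ni z$ disjoint from $C_n$ (possible because $C_n$ is a C-set), and a small box whose left, top and bottom edges miss $B$ and whose right edge lies in $V$; any relative frontier point of $U\cap O\cap(\text{box})$ would have to lie in $\tau\cap R\subseteq C_n$ and in $O$ simultaneously, which is impossible, so this set is the desired nonempty bounded clopen set. Without this localization step (or some substitute mechanism), your outline does not constitute a proof.
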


\begin{proof}Suppose that $\tilde E$, $U$, and $\beta:=\partial U$ satisfy all  of the hypotheses. Let $$A:=\{y\in Y: \beta\text{ contains an interval of }[t_y,\infty)\times \{y\}\}.$$ Since every collection of pairwise disjoint arcs of a simple closed curve is countable, $A$ is countable. So $\tilde E\cap (\mathbb R\times A)$ is countable. Observe also that since  $B$ is closed in $ \mathbb R\times Y$ and $Y\subset \mathbb P$, the space $E$  has a neighborhood basis of C-sets  of the form $E\cap (-\infty,t]\times W$, where $t\in \mathbb R$ and $W$ is clopen in $Y$. 
 By \cite[Theorem 4.7]{coh} and the assumption that $\tilde E\cup \{\infty\}$ is connected,   we see that $\tilde E\setminus (\mathbb R\times A)\cup \{\infty\}$ is connected. 
 
Now aiming for a contradiction, suppose that  $\tilde E=\bigcup \{C_n:n<\omega\}$ where each $C_n$ is a nowhere dense C-set in $\tilde E$. Let  $V$ be the unbounded component of $\mathbb R^2 \setminus \beta$.  
Since $\tilde E\setminus (\mathbb R\times A)\cup \{\infty\}$ is connected, $$\tau:=\overline{U\cap \tilde E}\cap \overline{V\cap \tilde E}\cap \tilde E\setminus (\mathbb R\times A)$$ is  non-empty.  Note also that $\tau$ is a relatively closed subset of  $$\beta\cap \tilde E\setminus (\mathbb R\times A)=\beta\cap E\setminus (\mathbb R\times A),$$ which is a $G_\delta$-subset of $E$.   And $E$ is a $G_\delta$-subset of $\mathbb R \times \mathbb P$ because $B$ is closed and $B\setminus E$ is the union of countably many closed sets $B+\langle \frac{1}{n},0\rangle$.   Therefore $\tau$ is completely metrizable.  
By Baire's theorem there is an open rectangle $(x_1,x_2)\times (y_1,y_2)$ and $n<\omega$ such that $\varnothing\neq \tau\cap ((x_1,x_2)\times (y_1,y_2))\subset C_n.$
Let $$\langle t_y,y\rangle\in \tau\cap ((x_1,x_2)\times (y_1,y_2)).$$ See Figure 2.
  \begin{figure}
 \includegraphics[scale=0.27]{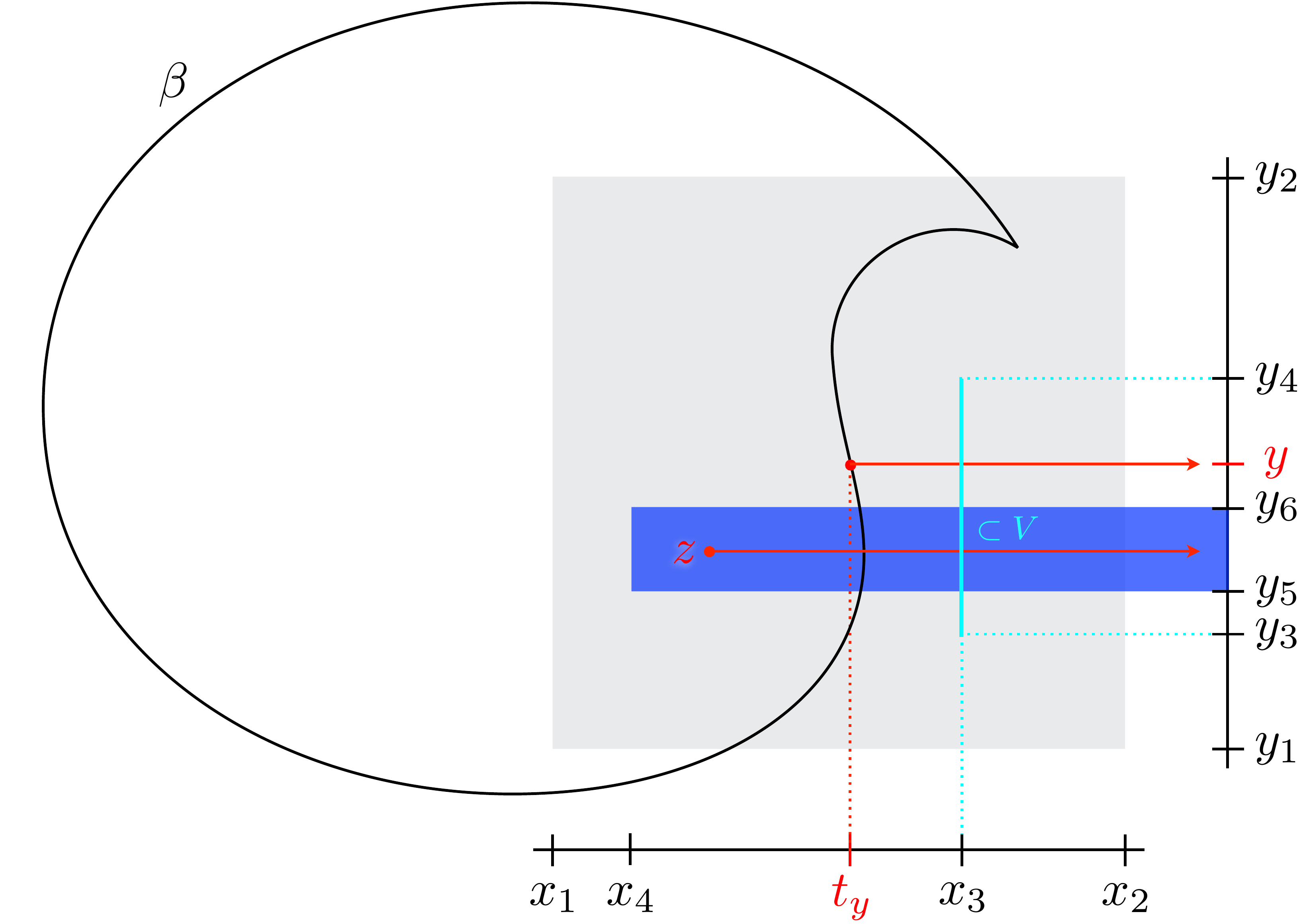}
 \caption{Construction of  clopen set in Lemma \ref{lem}.}
 \end{figure}
Since $y\notin A$, there exist $x_3\in (t_y,x_2)$ and $y_3,y_4\in (y_1,y_2)\cap \mathbb Q$ such that $y_3<y<y_4$  and $\{x_3\}\times [y_3,y_4]\subset \mathbb R ^2\setminus \beta$. Without loss of generality, assume $\{x_3\}\times [y_3,y_4]\subset V$.

Note that $U\cap\tilde E\cap ((x_1,x_3)\times (y_3,y_4))\neq\varnothing$ because $$\langle t_y,y\rangle\in \overline{U\cap \tilde E}\cap ((x_1,x_3)\times (y_3,y_4)). $$ Further,   $C_n\cup (\tilde E\cap (\mathbb R\times A))$ is nowhere dense in $\tilde E$, so there exists $$z\in U\cap\tilde E\cap ((x_1,x_3)\times (y_3,y_4))\setminus (C_n\cup (\mathbb R\times A)).$$  Since $z$ is an endpoint and $B$ is closed in $\mathbb R\times Y$, we can find $y_5,y_6\in (y_3,y_4)\cap \mathbb Q$ and $x_4>x_1$ such that $y_5<y_6$, 
$z\in(x_4,\infty)\times (y_5,y_6)$, and 
$$(\{x_4\}\times [y_5,y_6])\cup ([x_4,\infty)\times \{y_5,y_6\})\cap B=\varnothing.$$   Further,  since $z\notin C_n$ there is a relatively clopen subset $O$ of $\tilde E\setminus (\mathbb R\times A)$ such that $z\in O$ and $C_n\cap O=\varnothing$. Then $$U\cap O \cap ([x_4,x_3]\times [y_5,y_6])$$ is a non-empty bounded  clopen subset of $\tilde E\setminus (\mathbb R\times A)$. This contradicts the previously established fact that  $\tilde E\setminus (\mathbb R\times A)\cup \{\infty\}$ is connected.
 \end{proof}

\section{Proof of Theorem \ref{t1}}

We are now ready to prove Theorem 1.1. 

As indicated in Section 2.2, we will identify $\mathbb Z ^\omega$ with $\mathbb P$, so that $$J(\mathcal F)\subset \mathbb R\times \mathbb P\subset \mathbb R ^2.$$Let $E(\mathcal F)=\{\langle t_{\underline{s}},\underline{s}\rangle:s\in \mathbb S\}$ denote the set of endpoints of $J(\mathcal F)$. Define \begin{align*}
I(\mathcal F)&=\{x\in J(\mathcal F):T(\mathcal F^n(x))\to\infty\} \text{ and}\\
\tilde{E}(\mathcal F)&=  I(\mathcal F)\cap E(\mathcal F).
\end{align*}  
The conjugacy  \cite[Theorem 9.1]{rem2} shows that  $\tilde{E}(\mathcal F) \simeq \dot E(f)$, and $\tilde{E}(\mathcal F)\cup \{\infty\}$ is connected by \cite[Theorem 3.4]{rem}.  Thus to reach the conclusion that $\dot E(f)$ cannot be written as a countable union of nowhere dense C-sets,  by Lemma \ref{lem} we only need to  find a simple closed curve $\beta\subset \mathbb R ^2$ such that:
\begin{itemize}
\item if $U$ is the bounded component of $\mathbb R ^2\setminus \beta$ then $U\cap \tilde E(\mathcal F)\neq\varnothing$, and
\item $\beta\cap J(\mathcal F)\subset I(\mathcal F)$ (in particular, $\beta\cap E(\mathcal F)\subset \tilde E(\mathcal F)$).
\end{itemize}

The $\beta$ that we construct will essentially be the boundary of a union of rectangular regions in $\mathbb R ^2$. We recursively define the collections of rectangles (or ``boxes'') as follows. Choose $n$ sufficiently large so that $(-n,n)^2\cap  \tilde{E}(\mathcal F)\neq\varnothing$. For the sake of simplicity, let us assume $n=1$. Let $\mathscr B_0=\{[-1,1]^2\}$.

\begin{ucl}There is a sequence of finite collections of boxes $\mathscr B_1, \mathscr B_2,\ldots$  such that for every $k\geq 1$ and  $B\in\mathscr B_k$: 
\begin{enumerate}
\item $B=[a,b]\times[c,d]$ for some  $a,b\in \mathbb R$ and $c,d\in \mathbb Q$ with $a<b$ and $c<d$,
 \item $b-a=F^{-k^2}(1)$, 
\item $d-c\leq F^{-k^2}(1)$,
\item there exists $B'\in \mathscr B_{k-1}$ such that $\{a\}\times [c,d]\subset\{b'\}\times [c',d'],$
\item $\{a\}\times [c,d]\cap J(\mathcal F)\neq\varnothing$,  
\item $B\cap B^*=\varnothing$ for all $B^*\in \mathscr B_{k}\setminus \{B\}$,
\item if $\underline s,\underline {\hat s}\in [c,d]\cap \mathbb Z ^\omega$ then $\underline s\restriction 2k^2=\underline{\hat s}\restriction 2k^2$ (i.e.\ $s_i=\hat s_i$ for all $i<2k^2$), and
\item  for every $B'\in \mathscr B_{k-1}$ and $x\in J(\mathcal F)\cap \{b'\}\times [c',d']$  there exists $B\in \mathscr B_k$ such that $x\in B$.
\end{enumerate} 
\end{ucl}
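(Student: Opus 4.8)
The plan is to build the collections $\mathscr B_1,\mathscr B_2,\ldots$ by recursion on $k$, producing $\mathscr B_k$ from $\mathscr B_{k-1}$. First I would record a structural feature that simplifies everything: every box in $\mathscr B_{k-1}$ shares the same right-hand $t$-coordinate. This holds for $\mathscr B_0$ trivially, and it is preserved, because property (4) forces each $B\in\mathscr B_k$ to have left edge $a$ equal to the common right edge of $\mathscr B_{k-1}$, while property (2) fixes the width, so $b=a+F^{-k^2}(1)$ is common as well. Writing $\rho_0=1$ and $\rho_k=\rho_{k-1}+F^{-k^2}(1)$, all boxes of $\mathscr B_k$ will occupy the vertical strip $[\rho_{k-1},\rho_k]\times\mathbb R$; this gives (2) outright and reduces the disjointness requirement (6) to disjointness of the vertical intervals $[c,d]$. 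The strips are nested, and their total horizontal extent $\rho_k$ stays bounded because $\sum_k F^{-k^2}(1)$ converges by the remark following Lemma \ref{t7}.

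Next, fix a parent box $B'=[a',b']\times[c',d']\in\mathscr B_{k-1}$ and consider the fiber set $P:=J(\mathcal F)\cap(\{b'\}\times[c',d'])$ sitting on its right edge, which is compact since $J(\mathcal F)$ is closed. The map $x\mapsto\underline s(x)\restriction 2k^2$ is continuous from $P$ into the discrete space $\mathbb Z^{2k^2}$, since the second coordinate of $x$ determines $\underline s(x)$ through the order isomorphism $\mathbb Z^\omega\cong\mathbb P$ of \cite[Observation 3.2]{rem}; hence it takes only finitely many values $w_1,\ldots,w_m$, and $P=K_1\sqcup\cdots\sqcup K_m$ where $K_j$ is the clopen piece with initial address $w_j$. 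Because a cylinder $\{\underline s:\underline s\restriction 2k^2=w_j\}$ is order-convex in the lexicographic order, under the identification with $\mathbb P$ it becomes an interval $(\alpha_j,\beta_j)$ whose irrational points contain the compact set $K_j^{\,y}$ of second coordinates of $K_j$. Any box whose vertical interval lies inside $(\alpha_j,\beta_j)$ then automatically satisfies (7).

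It remains to cover each $K_j^{\,y}$ by finitely many pairwise disjoint closed intervals of length at most $F^{-k^2}(1)$, with rational endpoints, each meeting $K_j^{\,y}$ and lying inside $(\alpha_j,\beta_j)\cap(c',d')$. Taking $a=b'$, extending each such interval rightward by $F^{-k^2}(1)$, and collecting these over all $j$ and all $B'$ produces $\mathscr B_k$: properties (1),(3),(4),(5) are read off directly, (8) holds because the chosen intervals cover the second coordinates of all of $P$, and (6) holds because intervals over one parent are disjoint by construction while distinct parents have disjoint vertical ranges, being disjoint boxes in a common strip.

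The one genuinely delicate point is reconciling the exact length bound with disjointness and full coverage: a naive grid of rational cut points tiles $[c',d']$ but forces adjacent closed intervals to share endpoints, violating (6), so the hard part is arranging disjoint closed intervals that still capture every point of the fiber. This is where the irrationality of the brush's $y$-coordinates is essential. Since $K_j^{\,y}$ is compact and contained in $\mathbb P$, its accumulation points are again irrational, so no rational number is an accumulation point of $K_j^{\,y}$; hence every rational has a neighborhood disjoint from $K_j^{\,y}$. I would therefore lay down a fine rational grid of spacing below $F^{-k^2}(1)$ across $[\min K_j^{\,y},\max K_j^{\,y}]$, excise a tiny rational-length neighborhood of each grid point, which meets no point of $K_j^{\,y}$, and use the resulting closed gaps as the vertical intervals, discarding any that miss $K_j^{\,y}$. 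These are pairwise disjoint, short, rational-ended, and together still cover $K_j^{\,y}$, which completes the recursion.
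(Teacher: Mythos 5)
Your proposal is correct and follows essentially the same route as the paper: recursion using the common right edge $b'$, compactness of $J(\mathcal F)$ on that edge, the clopen order-convex cylinders (as rational-endpoint intervals) to secure (7), and a rational grid of spacing at most $F^{-k^2}(1)$ whose cut points are buffered away from the compact fiber—possible precisely because that fiber misses $\mathbb Q$—to secure disjointness (6). The only differences are organizational (you work parent-by-parent and cylinder-by-cylinder, while the paper uses one global partition $q_0<\cdots<q_n$ of $[-1,1]$ and drops adjacent intervals instead of excising neighborhoods of grid points), which does not change the substance of the argument.
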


\begin{proof}Suppose $\mathscr B_{k-1}$ has already been defined.  Let $$K=\bigcup_{B'\in \mathscr B_{k-1}} \hspace{-2mm}J(\mathcal F)\cap \{b'\}\times [c',d'].$$ Since $J(\mathcal F)$ is closed in $\mathbb R^2$ \cite[Theorem 3.3]{rem} and $\mathscr B_{k-1}$ is finite, $K$ is compact.  Given a segment of integers $\langle s_0,s_1,...,s_{2k^2-1}\rangle$ of length $2k^2$, observe that the set $\{\underline s\in \mathbb Z^\omega:\underline s\restriction 2k^2=\langle s_0,s_1,...,s_{2k^2-1}\rangle\}$  is clopen and convex in the lexicographic ordering on $\mathbb Z^\omega$, and thus corresponds to  the intersection of $\mathbb P$ with an interval whose endpoints are in $\mathbb Q$.  By compactness of $K$,  the projection $$\pi_1[K]=\{y\in \mathbb R :[0,\infty)\times \{y\}\cap K\neq\varnothing\}$$  can be covered by finitely many of  those  intervals. Let $\mathcal I$ be a finite cover of $\pi_1[K]$ consisting of such intervals. Let $$-1=q_0<q_1<q_2<...<q_n=1$$ be the increasing enumeration of all   $c$'s and $d$'s used in $\mathscr B_{k-1}$, together with the endpoints  of all  intervals in $\mathcal I$. If necessary, we can insert a few more rationals to obtain   $q_{i+1}-q_i\leq F^{-k^2}(1)$ for each $i< n$. Since $\pi_1[K]$ is a compact set missing $\mathbb Q$, we can also guarantee that for every $i<n-1$, $[q_i,q_{i+1}]\cap \pi_1[K]=\varnothing$ or $[q_{i+1},q_{i+2}]\cap \pi_1[K]=\varnothing$. 

Note that $b'$ is the same for each box $B'\in \mathscr B_{k-1}$ (in fact, $b'=1+\sum_{i=1}^{k-1}F^{-i^2}(1)$). If $\{b'\}\times [q_i,q_{i+1}]\cap J(\mathcal F)\neq\varnothing$ then add the box $[b',b'+F^{-k^2}(1)]\times [q_i,q_{i+1}]$ to the collection $\mathscr B_k$.  It can be easily shown that defining $\mathscr B_k$ in this manner will satisfy conditions (1)--(8).\end{proof}



Let $\mathscr B=\bigcup \{\mathscr B_k:k<\omega\}$. For each $B=[a,b]\times [c,d]\in \mathscr B$ let $$h_B:\{a\}\times [c,d]\to ([a,b]\times \{c,d\})\cup (\{b\}\times [c,d])$$ be a homeomorphism with fixed points $\langle a,c\rangle$ and $\langle a,d\rangle$. 

\begin{ucl}\label{t9}There is a continuous mapping $$g:[0,1]\to[1,\infty)\times [-1,1]$$  such that $g(0)=\langle 1,-1\rangle$,  $g(1)=\langle 1,1\rangle$, and $g[0,1]\cap J(\mathcal F)\subset I(\mathcal F)$.\end{ucl}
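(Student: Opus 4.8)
The plan is to build $g$ as the uniform limit of a sequence of boundary paths $g_0,g_1,g_2,\dots$, where $g_k$ runs along the box boundaries of levels $\le k$, detouring around each box of $\mathscr B_k$ in place of its left edge. Let $g_0$ parametrize the right edge $\{1\}\times[-1,1]$ of the initial box, traversed from $\langle 1,-1\rangle$ to $\langle 1,1\rangle$. I would obtain $g_k$ from $g_{k-1}$ by replacing, for each $B=[a,b]\times[c,d]\in\mathscr B_k$, the subsegment $\{a\}\times[c,d]$ — which by (4) lies on the right edge of the parent box already traversed by $g_{k-1}$ — with its image $h_B[\{a\}\times[c,d]]$, i.e.\ the bottom, right, and top edges of $B$; in the gaps between consecutive boxes of $\mathscr B_k$ the path continues straight up the parent's right edge. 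Since $b-a=F^{-k^2}(1)$ by (2) and $d-c\le F^{-k^2}(1)$ by (3), each detour is confined to a box of diameter at most $\sqrt2\,F^{-k^2}(1)$, so $\sup_t|g_k(t)-g_{k-1}(t)|\le\sqrt2\,F^{-k^2}(1)$. Because $\sum_k F^{-k^2}(1)$ converges (Lemma \ref{t7}), the $g_k$ converge uniformly to a continuous map $g$. Each $h_B$ fixes the corners $\langle a,c\rangle$ and $\langle a,d\rangle$, so the endpoints $g(0)=\langle 1,-1\rangle$ and $g(1)=\langle 1,1\rangle$ are preserved; and since every box lies in $[1,\infty)\times[-1,1]$, so does $g[0,1]$.

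I would then show $g[0,1]\cap J(\mathcal F)$ consists only of escaping points. Any point of $g[0,1]$ that is frozen from some stage on lies either on a horizontal edge of a box, which has rational height and so misses $J(\mathcal F)\subset\mathbb R\times\mathbb P$, or on a surviving vertical gap segment of a parent right edge, which misses $J(\mathcal F)$ by (8). Hence the only possible points of $g[0,1]\cap J(\mathcal F)$ are the limits $z$ of infinite chains of boxes $(B_k)_{k\ge1}$ with $B_k=[a_k,b_k]\times[c_k,d_k]\in\mathscr B_k$ and each $B_k$ attached to $B_{k-1}$ as in (4); by (2) and (3) such a chain shrinks to a single point $z=\langle x_\infty,y_*\rangle$ with $x_\infty=1+\sum_{i=1}^\infty F^{-i^2}(1)$ and $\{y_*\}=\bigcap_k[c_k,d_k]$.

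Finally I would verify that every such $z$ lying in $J(\mathcal F)$ escapes, which is where Lemmas \ref{t5} and \ref{t6} enter. For each $k$, property (5) gives a point $q_k\in J(\mathcal F)$ on the left edge $\{a_k\}\times[c_k,d_k]$, so $T(q_k)=a_k=1+\sum_{i=1}^{k-1}F^{-i^2}(1)$ and $\underline s(q_k)\in[c_k,d_k]\cap\mathbb Z^\omega$; since $\underline s(z)\in[c_k,d_k]\cap\mathbb Z^\omega$ as well, property (7) yields $\underline s(q_k)\restriction 2k^2=\underline s(z)\restriction 2k^2$. As $T(\mathcal F^n(\cdot))$ depends on the external address only through its first $n$ entries, applying Lemma \ref{t6} to the first $2k^2$ iterates (which see only this common prefix) gives
$$T(\mathcal F^{2k^2}(z))\ge F^{2k^2}\big(T(z)-T(q_k)\big)\ge F^{2k^2}\big(F^{-k^2}(1)\big)=F^{k^2}(1),$$
using $T(z)-T(q_k)=x_\infty-a_k=\sum_{i\ge k}F^{-i^2}(1)\ge F^{-k^2}(1)$. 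For $k\ge5$ this is precisely the hypothesis of Lemma \ref{t5}, so $T(\mathcal F^n(z))\ge F^k(1)$ for all $n\in[2(k-1)^2,2k^2]$, and letting $k\to\infty$ gives $T(\mathcal F^n(z))\to\infty$, i.e.\ $z\in I(\mathcal F)$.

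The main obstacle is the bookkeeping in the construction: making the replacement scheme precise enough that the $g_k$ carry a common parametrization (so uniform convergence is meaningful) and that $g[0,1]\cap J(\mathcal F)$ is genuinely exhausted by the chain limits $\langle x_\infty,y_*\rangle$, with a full descending chain available at every level $k$. Once that is in place the escaping estimate is essentially forced, since the widths $F^{-k^2}(1)$ were chosen exactly so that forward stretching over $2k^2$ steps lands back on $F^{k^2}(1)$, matching Lemma \ref{t5} on the nose.
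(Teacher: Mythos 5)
Your construction of the $g_k$, the uniform Cauchy estimate via box diameters $\sqrt2\,F^{-k^2}(1)$ and Lemma \ref{t7}, and your elimination of ``frozen'' points (horizontal edges miss $J(\mathcal F)$ because $c,d\in\mathbb Q$; surviving vertical segments miss $J(\mathcal F)$ by (8)) all coincide with the paper's proof. Where you genuinely diverge is the escape estimate. The paper never computes the limit point: for $x=g(t)\in J(\mathcal F)$ it argues by continuity of $T\circ\mathcal F^{2j^2}$ that it suffices to find, in each neighborhood of $x$, a witness $w\in J(\mathcal F)$ inside a deep box; it then manufactures a stretched point on the right edge of $B(j)$ sharing the \emph{exact} address of a point $y$ on its left edge, applies Lemma \ref{t6} to that pair, and transfers the estimate to $w$ using prefix agreement (7) plus monotonicity. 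You instead pin down $g(t)=\langle x_\infty,y_*\rangle$ with $x_\infty=1+\sum_{i=1}^\infty F^{-i^2}(1)$ and run the stretching argument directly against the endpoint witness $q_k$ of (5) on the left edge of $B_k$, using $T(g(t))-T(q_k)\geq F^{-k^2}(1)$; this removes the continuity/density step and the auxiliary points entirely, at the cost of needing the limit's coordinates exactly (which your chain analysis does supply). One caveat: this is not a literal application of Lemma \ref{t6}, whose hypothesis is full equality of addresses, since $\underline s(g(t))$ and $\underline s(q_k)$ agree only on the first $2k^2$ entries. Your prefix-dependence remark repairs it --- replace $g(t)$ by $\langle T(g(t)),\underline s(q_k)\rangle$, which lies in $J(\mathcal F)$ because the brush contains the whole ray $[T(q_k),\infty)\times\{\underline s(q_k)\}$, and which has the same value of $T\circ\mathcal F^{2k^2}$ --- and this is the very observation the paper itself uses in its transfer step, so the extension is legitimate; just record it as an explicit variant of Lemma \ref{t6} rather than citing the lemma as stated. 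Finally, the bookkeeping you flag (that a non-frozen point moves at \emph{every} level, yielding a full chain $(B_k)$) closes easily: all boxes of $\mathscr B_{k+1}$ lie in the strip to the right of the common level-$k$ right edge $x=1+\sum_{i=1}^{k}F^{-i^2}(1)$, so a point that once fails to move can never move again, and the dichotomy ``eventually frozen versus full chain'' is exhaustive.
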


 \begin{proof}

 The mapping $g$ will be the pointwise limit of a uniformly Cauchy sequence of continuous functions.  To begin, define $g_0(t)=\langle 1,2t-1\rangle$ for all $t\in [0,1]$.  Now suppose $k\geq 1$ is given and $g_{k-1}$ has been defined.   Let $t\in [0,1]$. We set $$g_k(t)=g_{k-1}(t)$$ if $g_{k-1}(t)$ is not in any member of $\mathscr B_k$.  Otherwise, by item (6) there is a unique box $B=[a,b]\times[c,d]\in \mathscr B_k$ which contains $g_{k-1}(t)$.  Inductively  $g_{k-1}(t)$ is contained in some element of $\mathscr B_0\cup\ldots\cup \mathscr B_{k-1}$, so by  (4) we have $g_{k-1}(t)\in \{a\}\times [c,d]$.   Put $$g_k(t)=h_{B}(g_{k-1}(t)).$$ The mapping $g_k$ defined in this manner is easily seen to be continuous.  \begin{figure}
\includegraphics[scale=0.6]{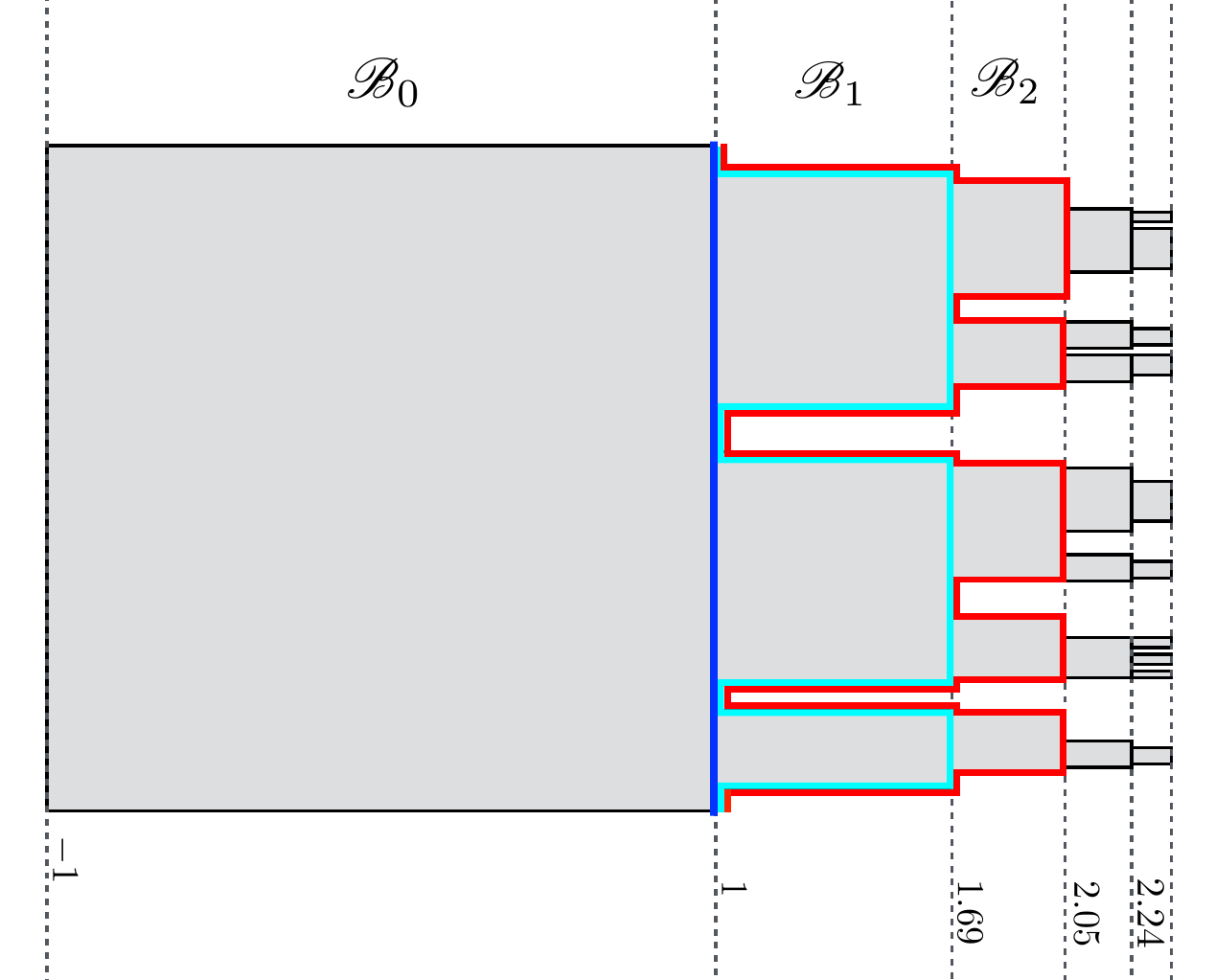}
\caption{Illustration of  $\color{blue}g_0,\color{cyan}g_1\color{black}, and\; \color{red}g_2$\color{black}.}
 \end{figure} Note that the image of $g_k$ is contained in $[1,\infty)\times [-1,1]$, $g_k(0)=\langle 1,-1\rangle$, and $g_k(1)=\langle 1,1\rangle$. 
 To see that the sequence $(g_k)$ is uniformly Cauchy, fix $\varepsilon>0$.  Let $N$ be  such that $\sum_{k=N}^\infty 1/k^2<\varepsilon/5$. 
  For any $t\in [0,1]$, if $g_k(t)\neq g_{k-1}(t)$ then $g_k(t)$ and $g_{k-1}(t)$ belong to the same element of $\mathscr B_k$. Hence, by items (2) and (3), the distance between $g_k(t)$ and $g_{k-1}(t)$ is at most $\sqrt{2}F^{-k^2}(1)$. Combined with Lemma \ref{t7} we have   $$ |g_k(t)-g_{k-1}(t)|\leq \sqrt{2}F^{-k^2}(1)<\sqrt{2}\frac{3}{k^2}<\frac{5}{k^2}$$ for every $k<\omega$ and $t\in [0,1]$. Thus for any $i> j\geq N$ and $t\in [0,1]$, $$ |g_i(t)-g_{j}(t)|\leq \sum_{k=j}^{i-1} |g_{k+1}(t)-g_{k}(t)| <\sum_{k=N}^\infty \frac{5}{k^2}<\varepsilon.$$   
This proves that $(g_k)$ is uniformly Cauchy.  Therefore $(g_k)$ converges uniformly to a continuous function  $g:[0,1]\to[1,\infty)\times [-1,1]$ which satisfies $g(0)=\langle 1,-1\rangle$ and  $g(1)=\langle 1,1\rangle$.

It remains to show that $g[0,1]\cap J(\mathcal F)\subset I(\mathcal F)$. Let $x=g(t)\in g[0,1]\cap J(\mathcal F)$. By Lemma \ref{t5} and the fact $F^k(1)\to\infty$,    to prove $x\in I(\mathcal F)$ it suffices to show  $T(\mathcal F^{2j^2}(x))\geq F^{j^2}(1)$ for all $j\geq 1$.  Fix $j\geq 1$.   To prove $T(\mathcal F^{2j^2}(x))\geq F^{j^2}(1)$, by  continuity of $T\circ \mathcal F^{2j^2}$ we only need to show that every neighborhood of $x$ contains a point $w$ such that $T(\mathcal F^{2j^2}(w))\geq F^{j^2}(1)$.

 Let $W$ be a neighborhood of   $x$. Observe that since $c$ and $d$ are rational in item (1),  the top and bottom edges of all boxes miss $J(\mathcal F)$. So  if $g_k(t)\in J(\mathcal F)$, then $g_k(t)$ belongs to the right edge of an element of $\mathscr B_{k}$.  Then by the construction of $g_{k+1}$ and items (4) and (8) we get $g_{k+1}(t)\neq g_k(t)$. Since $g_k(t)\to x$, it follows that the sequence $g_0(t),g_1(t),\ldots$  is not eventually constant. So for every $k$ there exists $B\in \mathscr B_k$ such that $g_k(t)\in B$. By (2) and (3), the diameter of each box  in $\mathscr B_k$ is at most $\sqrt{2} F^{-k^2}(1)$, which goes to $0$ as $k\to\infty$ (e.g.\ by Lemma \ref{t7}).   Since $g_k(t)\to x$ and $x$ belongs to the interior of $W$, there exists $k>j$ such that $W$  contains   a box $B(k)\in \mathscr B_k$. By (5) there exists $w\in B(k)\cap W\cap J(\mathcal F)$. 

 By (4),  for each $i<k$ there exists a unique  $B(i)\in \mathscr B_i$ such that $\pi_1[B(k)]\subset \pi_1[B(i)]$. By (4) and (5) there exists $y\in B(j)\cap B(j-1)\cap J(\mathcal F)$. Then $y$ is on the left edge of $B(j)$. Let $z$ be the point on the right edge of $B(j)$ such that $\underline s(z)=\underline s(y)$ (see Figure 3). Then $z\in J(\mathcal F)$, and by (2) we have $T(z)-T(y)=  F^{-j^2}(1)$. Thus,     by Lemma \ref{t6} $$T(\mathcal F^{2j^2}(z))\geq  F^{2j^2}(T(z)-T(y))=F^{2j^2}(F^{-j^2}(1))=F^{j^2}(1).$$ Note also that $\underline s(z)\restriction 2j^2= \underline s(w)\restriction 2j^2$ because $z$ and $w$ are contained in the same horizontal strip where the first $2j^2$ coordinates agree; see items (7)  and (4).   The preceding equations together with $T(z)\leq T(w)$ imply that $$T(\mathcal F^{2j^2}(w))\geq F^{j^2}(1),$$ as desired.\end{proof}



\begin{figure}[h]
 \includegraphics[scale=0.25]{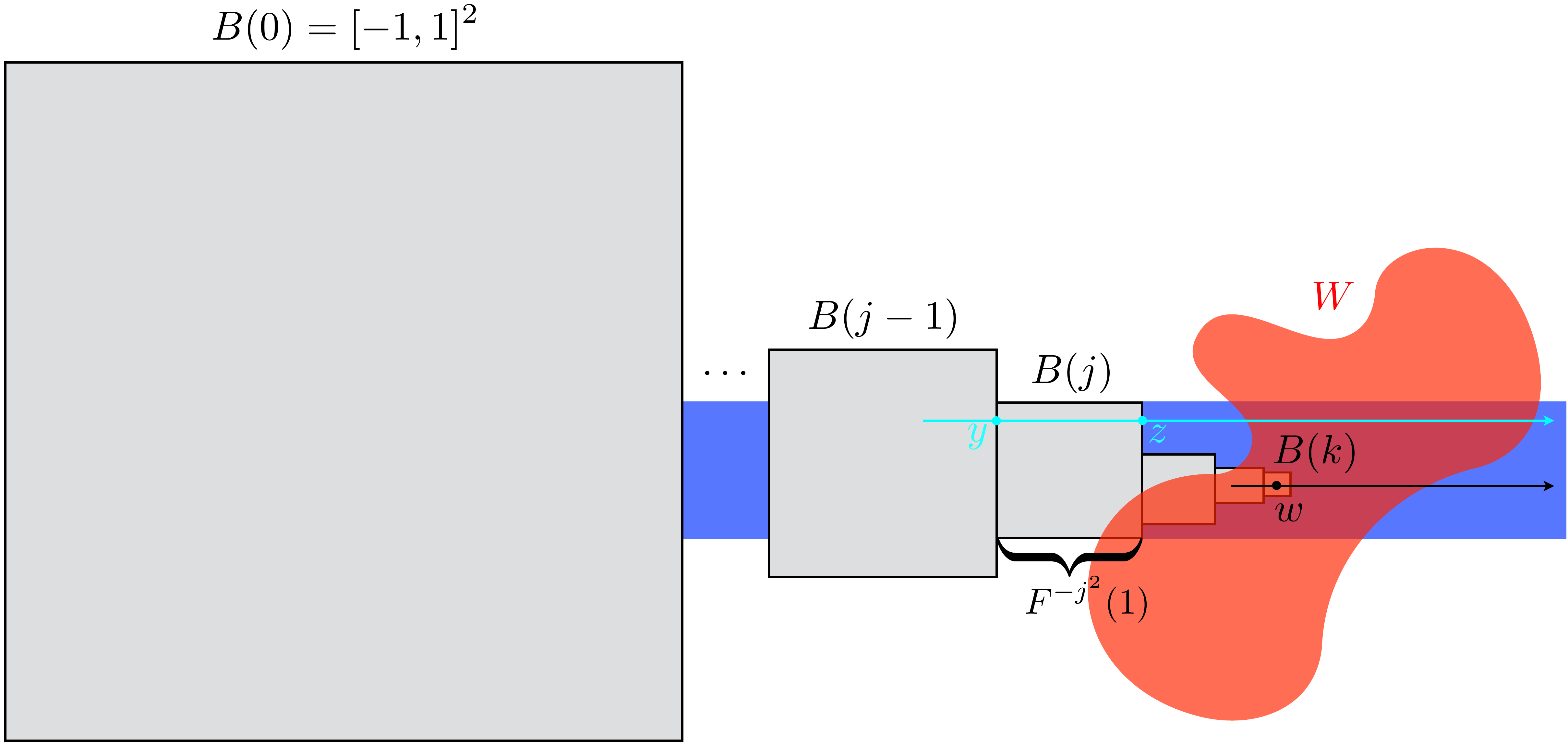}
\caption{Proof of Claim \ref{t9}.}
 \end{figure}

By \cite[Corollary 8.17]{nad} and  \cite[Theorem 8.23]{nad},  there is an arc   $\alpha:[0,1]\hookrightarrow g[0,1]$ such that $\alpha(0)=g(0)=\langle 1,-1\rangle$ and $\alpha(1)=g(1)=\langle 1,1\rangle$.  Let $\beta=(\{-1\}\times [-1,1])\cup ([-1,1]\times\{-1,1\}) \cup \alpha[0,1].$ Then $\beta$ is a simple closed curve, and by Claim \ref{t9} we have  $\beta\cap E(\mathcal F)\subset \tilde E(\mathcal F).$   If $U$ is the bounded component of $\mathbb R ^2\setminus\beta$ then $(-1,1)^2\subset U$, hence   $U\cap  \tilde{E}(\mathcal F)\neq\varnothing$.  Now we can apply Lemma \ref{lem} to see that  $\tilde{E}(\mathcal F)$ cannot be written as a countable union of nowhere dense C-sets. This concludes the proof of Theorem \ref{t1}.\hfill $\blacksquare$


\section{Remarks on Theorem 1.1}
\begin{ur}In the proof of Theorem 1.1 it is possible to show that $g$ is one-to-one, so $\alpha=g$. And the curve $\beta$ is  just the boundary of $\bigcup \mathscr B$. \end{ur}


\begin{ur}\label{rem3}The statement of Theorem 1.1 can be strengthened to: 
\begin{quote}No neighborhood in $\tilde E(\mathcal F)$ (respectively, in $\dot E(f)$) can be covered by countably many nowhere dense C-sets of $\tilde E(\mathcal F)$ (of $\dot E(f)$).\end{quote}  To see this, let $x_0\in\mathbb R ^2$ and $\varepsilon>0$. Since $J(\mathcal F)$ is closed, there is a box $[a,b]\times [c,d]\subset B(x_0,\varepsilon/2)$ containing $x_0$ such that $$(\{a\}\times [c,d]\cup[a,b]\times \{c,d\})\cap J(\mathcal F)=\varnothing.$$  Begin the construction of the $\mathscr B_k$'s with $\mathscr B_0=\{[a,b]\times [c,d]\}$,  and choose $l$ large enough so that $$\sum_{k=1}^\infty F^{-(l+k)^2}(1)<\frac{\varepsilon}{2}.$$  Construct $\mathscr B_k$ with $l+k$ replacing each   $k$ in items (2), (3) and (7).  By the arguments in Claim \ref{t9}, if $x\in J(\mathcal F)$  lies in the limit of $(g_k)$ (constructed using the new $\mathscr B_k$'s), then  $T(\mathcal F^{2n^2}(x))\geq F^{n^2}(1)$ for all $n>l$ and consequently $x\in I(\mathcal F)$. As in Claim \ref{t9} we can construct an arc $\alpha$ such that $\beta:=(\{a\}\times [c,d]\cup[a,b]\times \{c,d\})\cup \alpha$ is a simple closed curve in $B(x_0,\varepsilon)$, the bounded component of $\mathbb R ^2\setminus  \beta$ contains $x_0$, and  $\beta\cap J(\mathcal F)\subset I(\mathcal F)$. Lemma \ref{lem} now shows that $B(x_0,\varepsilon)$ cannot be covered by countably many nowhere dense C-sets of $\tilde E(\mathcal F)$.\end{ur}

\begin{ur}As argued in Section 1, Theorem 1.1 implies that the escaping endpoint set $\dot E(f_a)$ is not homeomorphic to Erd\H{o}s space $\mathfrak E$ (Corollary 1.2).\end{ur}

\section{The space $F(f_a)\cup I(f_a)$; $a\in (-\infty,-1]$}




We are now ready to prove Theorem 1.3 and Corollary 1.4. Fix $a\in (-\infty,-1]$. 

\subsection*{Proof of Theorem \ref{t3}}Let $z\in \mathbb C$. By \cite[Section 9]{rem2}  and \cite[Theorem 2.8]{rem}, there is a homeomorphism $\varphi:\mathbb R ^2\to \mathbb C$ such that $\varphi[J(\mathcal F)]=J(f_a)$ and $\varphi[I(\mathcal F)]=I(f_a)$.  Let $x=\varphi^{-1}(z)$. We have shown that there is a Jordan curve $\beta\subset(\mathbb R ^2\setminus J(\mathcal F))\cup I(\mathcal F)$ around $x$; see Remark \ref{rem3}, or simply begin the construction in Section 6 with $\mathscr B_0=\{[-n,n]^2\}$ where $n\in \mathbb N$ is such that $x\in (-n,n)^2$. Then $\varphi[\beta]\subset F(f_a)\cup I(f_a)$ is a Jordan curve which separates $z$ from $\infty$.   \hfill$\blacksquare$
 
\medskip

In the proof of Corollary \ref{t4} below, we will make use of two well-known facts: (a) $F(f_a)$ is path-connected and (b) each component of $J(f_a)$ is contained in $I(f_a)$, with the possible exception of its endpoint.

\subsection*{Proof of Corollary \ref{t4}} Let $z_0,z_1\in F(f_a)\cup I(f_a)$. We will  find a path in $F(f_a)\cup I(f_a)$ from $z_0$ to $z_1$.  There are three cases to consider. 

The case $z_0,z_1\in F(f_a)$ is trivial since $F(f_a)$ is path-connected.  

The next case is that $z_0\in F(f_a)$ and $z_1\in I(f_a)$.  Let $\beta$ be a simple closed curve around the point $z_1$, such that $\beta\cap J(f_a)\subset I(f_a)$ (apply Theorem \ref{t3}).  Let $\gamma$ be the component of $J(f_a)$ containing $z_1$. There exists $z_2\in \beta\setminus J(f_a)$ and $z_3\in \gamma\cap \beta$.  There are paths $\alpha_1\subset F(f_a)$ from $z_0$ to $z_2$,  $\alpha_2\subset \beta$ from $z_2$ to $z_3$, and  $\alpha_3\subset \gamma$ from $z_3$ to $z_1$.  It is clear that $\alpha_3$ can be constructed to avoid the endpoint of $\gamma$, so that $\alpha_3\subset I(f_a)$. Then $\alpha_1\cup \alpha_2\cup \alpha_3\subset F(f_a)\cup I(f_a)$ contains a path from $z_0$ to $z_1$. 

The third and final case $z_0,z_1\in I(f_a)$ can be handled by connecting  each point $z_0$ and $z_1$ to a third point of  $F(f_a)$, as was done the second case.\hfill$\blacksquare$

\end{document}